\newtheorem{Theorem}{Theorem}[section]
\theoremstyle{definition}
\theoremstyle{remark}
\numberwithin{equation}{section}
\newcommand{\Z}{{\mathbb Z}}
\newcommand{\R}{{\mathbb R}}
\newcommand{\N}{{\mathbb N}}
\begin{document}

\title[Uniqueness of reflectionless Jacobi matrices]{Uniqueness of reflectionless
Jacobi matrices and the Denisov-Rakhmanov Theorem}

\author{Christian Remling}

\address{Mathematics Department\\
University of Oklahoma\\
Norman, OK 73019}

\email{cremling@math.ou.edu}

\urladdr{www.math.ou.edu/$\sim$cremling}

\date{June 14, 2010}

\thanks{2010 {\it Mathematics Subject Classification.} Primary 42C05 47B36 81Q10}

\keywords{Reflectionless Jacobi matrix, Denisov-Rakhmanov Theorem}

\thanks{CR's work has been supported
by NSF grant DMS 0758594}
\begin{abstract}
If a Jacobi matrix $J$ is reflectionless on $(-2,2)$ and has a single $a_{n_0}$ equal to
$1$, then $J$ is the free Jacobi matrix $a_n\equiv 1$, $b_n\equiv 0$. I'll discuss this
result and its generalization to arbitrary sets and present several applications, including the following:
if a Jacobi matrix has some portion of its $a_n$'s close to $1$,
then one assumption in the Denisov-Rakhmanov Theorem can be dropped.
\end{abstract}
\maketitle
\section{Statement of results}
A \textit{Jacobi matrix }is
a difference operator of the following type:
\[
(Ju)_n = a_nu_{n+1}+a_{n-1}u_{n-1}+b_nu_n
\]
Here, $a_n>0$ and $b_n\in\R$, and we also always assume that $a,b$ are bounded sequences.

Alternatively, one can represent $J$ by a tridiagonal matrix with respect to
the standard basis of $\ell^2(\Z)$:
\[
J = \begin{pmatrix} \ddots & \ddots & \ddots &&&& \\ & a_{-2} & b_{-1} & a_{-1} &&&\\
&& a_{-1} & b_0 & a_0 && \\ &&& a_0 & b_1 & a_1 & \\ &&&& \ddots & \ddots & \ddots
\end{pmatrix}
\]
Half line operators $J_+$, on $\ell^2(\Z_+)$, say, are defined similarly,
by considering a suitable truncation of this matrix.

The case $a_n\equiv 1$ is of particular interest; these operators are called
(discrete) \textit{Schr\"odinger operators.}

The Denisov-Rakhmanov (DR) Theorem \cite{Den,Rakh} says the following:
\begin{Theorem}[\cite{Den}]
Suppose that:\\
(1) $\sigma_{ess}(J_+)=[-2,2]$; (2) $\Sigma_{ac}(J_+)=[-2,2]$.\\
Then $a_n\to 1$, $b_n\to 0$ as $n\to\infty$.
\end{Theorem}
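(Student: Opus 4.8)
The plan is to reduce the statement to a question about the \emph{right limits} of $J_+$ and then bring the reflectionless theory to bear. Recall that a right limit is a whole-line Jacobi matrix $J^{(r)}$ obtained as a coefficient-wise limit $a_{n+n_k}\to a_n^{(r)}$, $b_{n+n_k}\to b_n^{(r)}$ along some sequence $n_k\to\infty$; since $a,b$ are bounded, a diagonal argument shows that right limits exist and that every subsequence has a sub-subsequence producing one. The elementary but organizing observation is that the conclusion $a_n\to 1$, $b_n\to 0$ is \emph{equivalent} to the assertion that every right limit of $J_+$ is the free whole-line matrix $a_n^{(r)}\equiv 1$, $b_n^{(r)}\equiv 0$: if the coefficients converge, every right limit is obviously free, and conversely, if $(a_n,b_n)$ failed to converge to $(1,0)$, one could extract from a bad subsequence a right limit different from the free matrix. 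So it suffices to show that every right limit is free.

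Next I would read off the two structural properties of right limits that the hypotheses provide. From (1), $\sigma_{ess}(J_+)=[-2,2]$, one gets $\sigma(J^{(r)})\subseteq[-2,2]$ for every right limit: any (generalized) eigenvalue of $J^{(r)}$ transplants back to approximate eigenfunctions of $J_+$ supported arbitrarily far out, hence lies in $\sigma_{ess}(J_+)$. From (2), $\Sigma_{ac}(J_+)=[-2,2]$, one gets the crucial fact that every right limit is \emph{reflectionless on} $(-2,2)$. This is the deep input and the heart of the matter: it is exactly the content of the a.c.\ spectrum theory that organizes this paper, namely that a full a.c.\ spectrum forces all right limits to be reflectionless on the (essential support of the) a.c.\ spectrum. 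I would invoke this as the main external ingredient.

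With these two facts in hand, each right limit $J^{(r)}$ is a whole-line Jacobi matrix that is reflectionless on $(-2,2)$ and has spectrum contained in $[-2,2]$, and I want to conclude that it is free. This is precisely where the uniqueness theorem for reflectionless matrices enters. To match its hypothesis I would verify that such a $J^{(r)}$ must have some coefficient $a_{n_0}^{(r)}=1$: the diagonal Green functions $G_n(z)=\langle\delta_n,(J^{(r)}-z)^{-1}\delta_n\rangle$ are Herglotz functions that are purely imaginary on $(-2,2)$ (reflectionless), analytic and real on $\R\setminus[-2,2]$ (no spectrum there), and normalized by $G_n(z)\sim -1/z$ at infinity; the behavior of $G_n$ at the spectral edge $z=2$ encodes $a_n^{(r)}$, and these constraints should pin down the edge asymptotics and force $a_{n_0}^{(r)}=1$ somewhere (indeed one expects $G_n$ to coincide with the free Green function $-(z^2-4)^{-1/2}$ at every site). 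Once a single coefficient equals $1$, the uniqueness theorem yields that $J^{(r)}$ is free, and since $J^{(r)}$ was an arbitrary right limit, the first paragraph finishes the proof.

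The main obstacle is twofold. The genuinely hard step is the assertion, used in the second paragraph, that a full a.c.\ spectrum forces the right limits to be reflectionless; this is not elementary and rests on the sum-rule/entropy or ``oracle'' machinery underlying the a.c.\ spectrum theorem. The second, more technical, difficulty is the reduction in the third paragraph: extracting from ``reflectionless on $(-2,2)$ with spectrum in $[-2,2]$'' the single normalization $a_{n_0}^{(r)}=1$ needed to apply the uniqueness theorem, which requires a careful analysis of the edge behavior of the diagonal Green functions (equivalently, the fact that the isospectral torus of the gapless set $[-2,2]$ degenerates to a single point).
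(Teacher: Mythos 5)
Your proposal is correct and follows essentially the route this paper endorses (the treatment of \cite{Remac}): pass to right limits, use that $\Sigma_{ac}(J_+)=[-2,2]$ forces every right limit to be reflectionless on $(-2,2)$ while $\sigma_{ess}(J_+)=[-2,2]$ confines its spectrum to $[-2,2]$, and conclude that each right limit is the free matrix. The only stylistic difference is your detour through locating a single $a_{n_0}^{(r)}=1$ in order to invoke the uniqueness theorem; the fact you correctly identify at the end --- that $\mathcal R_0([-2,2])$ consists of the free matrix alone --- already finishes the argument directly, without that intermediate normalization.
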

Here, $\Sigma_{ac}(J_+)$ denotes an essential support
of the absolutely continuous part of the spectral measure $\nu_+$ of $J_+$;
in other words, $\nu_{+,ac}(M)=0$ if and only if $|M\cap\Sigma_{ac}|=0$. Note that
$\Sigma_{ac}$ is determined up to sets of Lebesgue measure zero by this condition.
The absolutely continuous spectrum, $\sigma_{ac}$, can be obtained from $\Sigma_{ac}$
by taking the essential closure (of an arbitrary representative).

In general, both assumptions of the DR Theorem are needed.
However, in the Schr\"o\-din\-ger case ($a_n\equiv 1$), assumption (2) \textit{can }be dropped.
This surprising result is due to Damanik, Hundertmark, Killip, and Simon \cite{DHKS}; it predates
Denisov's work.

In this note, I'll show that it is also
possible to drop assumption (1) instead, and this in fact works in more general situations,
not just for Schr\"odinger operators. The basic tool will be a simple,
but perhaps also somewhat surprising observation about reflectionless Jacobi matrices.
Recall that a whole line Jacobi matrix $J$ is called \textit{reflectionless }on
a Borel set $M\subset\R$ of positive Lebesgue measure if
\[
\textrm{\rm Re }g_n(t) = 0 \quad \textrm{ for Lebesgue almost every }t\in M
\]
and for all $n\in\Z$. Here, $g_n(z)=\langle{\delta_n, (J-z)^{-1}\delta_n}\rangle$ is
the Green function of $J$ at site $n$, and $g_n(t)=\lim_{y\to 0+} g_n(t+iy)$;
this limit exists for almost every $t\in\R$. See \cite{Remac} for more
information on reflectionless operators and why they are important.

We will denote the set of reflectionless (on $M$) Jacobi matrices by $\mathcal R(M)$.
\begin{Theorem}
\label{T1.1}
Suppose that $J\in\mathcal R(I)$ with $I=[B-2A,B+2A]$. Then $a_n\ge A$ for all $n\in\Z$. If
$a_{n_0}=A$ for a single $n_0\in\Z$, then $a_n= A$, $b_n= B$ for all $n\in\Z$.
\end{Theorem}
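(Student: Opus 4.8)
The plan is to localize everything at a single bond and then run an extremal argument for Herglotz functions. Fix $n$ and let $m_n^+$ be the Weyl $m$-function of the half-line piece of $J$ to the right of site $n$ (living on $\{n+1,n+2,\dots\}$) and $m_{n+1}^-$ the $m$-function of the piece to the left of site $n+1$ (on $\{\dots,n-1,n\}$). Both are Herglotz, with expansions $m_n^+(z)=-1/z+O(z^{-2})$ at $z=\infty$, and the standard coupling formula expresses $g_n$ and $g_{n+1}$ through $m_n^+$ and $m_{n+1}^-$ with coupling constant $a_n^2$. Imposing $\mathrm{Re}\,g_n=0$ and $\mathrm{Re}\,g_{n+1}=0$ and eliminating yields the \emph{bond form} of the reflectionless condition,
\[
a_n^2\,\overline{m_n^+(t)}\,m_{n+1}^-(t)=1 \qquad \text{for a.e. } t\in I .
\]

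Next I would convert this boundary identity into an analytic continuation. Using the reflection $\overline{m_n^+(t+i0)}=m_n^+(t-i0)$, the relation reads $m_{n+1}^-(t+i0)=1/\bigl(a_n^2\,m_n^+(t-i0)\bigr)$ a.e.\ on $I$, which lets me glue $m_{n+1}^-$ (defined on $\C^+$) to $1/(a_n^2 m_n^+)$ (defined on $\C^-$) across $I$ into a single function $H$ analytic on $\Omega:=\C\setminus\bigl((-\infty,B-2A]\cup[B+2A,\infty)\bigr)$. Because $m_n^+$ and $m_{n+1}^-$ are Herglotz, one checks that $H$ maps $\Omega$ into $\C^+$. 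I then uniformize $\Omega$ by the strip $S=\{|\mathrm{Re}\,u|<\pi/2\}$ through $z=B+2A\sin u$, under which $I$ becomes the real segment $[-\pi/2,\pi/2]$ and $z=\infty$ the two ends of $S$. The free model is the benchmark: there $\hat H_0(u)=(i/A)e^{iu}$, a conformal bijection $S\to\C^+$. Matching end-asymptotics of $\hat H$ against $\hat H_0$ — from $H\sim-1/z$ at one end and $H=1/(a_n^2 m_n^+)\sim-z/a_n^2$ at the other — shows $\hat H/\hat H_0\to1$ at one end and $\to A^2/a_n^2$ at the other.

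The extremal step is now scalar. The composite $F:=\hat H\circ\hat H_0^{-1}$ is a Herglotz self-map of $\C^+$ that is asymptotically the identity at one boundary fixed point and asymptotically multiplication by $A^2/a_n^2$ at the other. Passing to $G(\omega):=-1/F(-1/\omega)$, which is again Herglotz, interchanges these so that $G(\omega)\sim\omega$ as $\omega\to\infty$ while $G'(0)=a_n^2/A^2$. Reading the angular derivative off the Nevanlinna representation $G(\omega)=\omega+b+\int\bigl(\tfrac{1}{s-\omega}-\tfrac{s}{1+s^2}\bigr)\,d\rho(s)$ gives
\[
\frac{a_n^2}{A^2}=G'(0)=1+\int\frac{d\rho(s)}{s^2}\ \ge\ 1 ,
\]
that is $a_n\ge A$. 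For the rigidity statement, $a_{n_0}=A$ forces $\int s^{-2}\,d\rho=0$, hence $\rho=0$ and $G=\mathrm{id}$; unwinding, $\hat H=\hat H_0$, so $m_{n_0}^+$ and $m_{n_0+1}^-$ both equal the free half-line $m$-function. Since an $m$-function determines its half-line Jacobi matrix, this yields $a_k=A$, $b_k=B$ on both half-lines, and with $a_{n_0}=A$ the claim follows for all $n$.

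I expect the main obstacle to be the rigorous justification of the gluing: deducing a genuine analytic continuation of $H$ across $I$ from the mere a.e.\ boundary identity requires the boundary-value theory of Herglotz functions (and is precisely the structural content of reflectionlessness), and one must also handle the possibility of spectrum in $\R\setminus I$, which affects $H$ only on $\partial\Omega$ and not in its interior. A secondary technical point is the identification of the end-asymptotics with honest \emph{angular} derivatives, so that the Julia--Wolff--Carath\'eodory inequality above is sharp and the equality case is controlled; both are standard but need care.
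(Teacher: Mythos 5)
Your overall strategy is sound and it is genuinely different from the paper's. The paper works at a single \emph{site}: it takes $H=-1/g_0$, uses the exponential Herglotz representation via the Krein function $\xi$ (which equals $1/2$ on $I$), factors $H=H_0h$ with $|h|\ge 1$ on $I$, and then invokes the parametrization of $\mathcal R(I)$ by half-line spectral measures $\nu_+$ together with the sum rule $\nu_+(\R)=a_0^2$ to get $a_0^2=\nu_+(\R)\ge\nu_+^{(0)}(\R)=1$, with rigidity coming from $|h|\equiv 1$ forcing $\xi\in\{0,1\}$ off $I$. You instead work at a \emph{bond}: the $m$-function form of reflectionlessness, analytic continuation of the glued function through the gap $I$, uniformization of the slit plane by a strip, and an angular-derivative (Julia--Wolff--Carath\'eodory) inequality for the resulting self-map of $\C^+$. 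Your computation $a_n^2/A^2=1+\int s^{-2}\,d\rho\ge1$ (via monotone convergence in ${\rm Im}\,G(iy)/y$) is correct, the equality case does force $\rho=0$ and $b=0$, and the asymptotics are unproblematic here because $J$ is bounded, so $m^\pm$ are actually analytic at $\infty$. The gluing lemma you worry about is standard for functions mapping both half-planes into $\overline{\C^+}$ with a.e.\ coinciding boundary values on an interval. One structural remark: your argument leans on $\Omega=\C\setminus(I^c)$ being simply connected, i.e.\ on $I$ being a single interval, so unlike the paper's $\xi$-function argument it does not extend to the general compact sets of Theorem \ref{T3.1}; on the other hand it gives a rather transparent "extremal problem for Herglotz functions" picture of why the free operator is the minimizer.

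There is, however, one step that is wrong as stated: the claim that imposing ${\rm Re}\,g_n=0$ and ${\rm Re}\,g_{n+1}=0$ and "eliminating" yields the bond identity $a_n^2\,\overline{m_n^+}\,m_{n+1}^-=1$. With the coupling formulas $g_n=m^-/(1-a_n^2m^+m^-)$, $g_{n+1}=m^+/(1-a_n^2m^+m^-)$, take $a_n=1$, $m^-=i$, $m^+=2i$ at some $t$: then $g_n=i/3$ and $g_{n+1}=2i/3$ are both purely imaginary, yet $\overline{m^+}\,m^-=2\ne1$. Two real conditions cannot force one complex identity, so the pointwise elimination from two consecutive sites fails; the bond characterization of reflectionlessness genuinely uses the condition at \emph{all} sites (or is simply the standard equivalent definition, see \cite{Remac} or \cite{Teschl}). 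This is repairable by citation rather than a fatal flaw, but as written that step of your derivation does not go through.
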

Note that we don't make any assumptions on the spectral behavior of $J$
outside the interval $I$; the spectrum of $J$ could be much bigger than this set,
and there is no restriction on the type of this additional spectrum (if present).
$\mathcal R(I)$ is a large space, and it contains many familiar operators, for example
solitons and all periodic operators whose spectrum contains $I$.

Theorem \ref{T1.1} implies the
promised Schr\"odinger version of the DR Theorem without hypothesis (1).
The proof is very easy if we follow the treatment of the DR Theorem that was given in \cite{Remac}.
The idea was to show that the $\omega$ limit set, defined as the collection of
limit points $\lim S^{n_j}J_+$, consists of the free Jacobi matrix ($a_n\equiv 1$,
$b_n\equiv 0$) only. Here, $S$ denotes the shift map, and we take limits with
respect to pointwise convergence of the coefficients.
Please see again \cite{Remac} for more background information.

So suppose now that $a_n\equiv 1$ and $\Sigma_{ac}(J_+)=[-2,2]$,
and let $J_0\in\omega(J_+)$. Then clearly
$J_0$ has coefficients $a_n(J_0)\equiv 1$ also; moreover, $J_0\in\mathcal R([-2,2])$ by
\cite[Theorem 1.4]{Remac}. Hence $b_n(J_0)\equiv 0$ by Theorem \ref{T1.1}, which is what
we wanted to show.

The same argument establishes the following stronger result.
\begin{Theorem}
\label{TDR}
Suppose that $\Sigma_{ac}(J_+)\supset [-2,2]$. Furthermore,
suppose that there exists a subsequence $n_j\to\infty$ with bounded gaps
(that is, $\sup (n_{j+1}-n_j)<\infty$) so that $a_{n_j}\to 1$. Then
\[
a_n\to 1, b_n\to 0 \quad (n\to\infty) .
\]
\end{Theorem}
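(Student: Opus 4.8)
The plan is to repeat the $\omega$-limit set argument that was just used for the Schr\"odinger version, so the only genuinely new task is to locate, inside an arbitrary limit point $J_0\in\omega(J_+)$, a single coefficient equal to $1$. Since $a,b$ are bounded, the shifts $S^mJ_+$ range over a compact set in the topology of pointwise convergence, so $\omega(J_+)\neq\emptyset$; moreover, it suffices to prove that every $J_0\in\omega(J_+)$ equals the free Jacobi matrix $a_n\equiv 1$, $b_n\equiv 0$. Indeed, a bounded orbit whose $\omega$-limit set reduces to a single point $L$ converges to $L$, and since the coefficients of $S^mJ_+$ at position $0$ are just $a_m$ and $b_m$, this gives $a_m\to a_0(L)=1$ and $b_m\to b_0(L)=0$, which is the assertion.

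So fix $J_0=\lim_k S^{m_k}J_+\in\omega(J_+)$ with $m_k\to\infty$. By \cite[Theorem 1.4]{Remac}, the hypothesis $\Sigma_{ac}(J_+)\supset[-2,2]$ forces $J_0\in\mathcal R([-2,2])$. Writing $[-2,2]=[B-2A,B+2A]$ with $A=1$ and $B=0$, the first part of Theorem \ref{T1.1} gives $a_n(J_0)\ge 1$ for every $n\in\Z$. Once I exhibit a single index $n_0$ with $a_{n_0}(J_0)=1$, the rigidity part of Theorem \ref{T1.1} immediately yields $a_n(J_0)\equiv 1$, $b_n(J_0)\equiv 0$, and we are done.

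Producing that one site is where the bounded-gap hypothesis is used, and I expect this to be the heart of the matter. Let $G=\sup_j(n_{j+1}-n_j)<\infty$. For each large $k$ there is, by the gap bound, an index $j(k)$ with $n_{j(k)}\in[m_k,m_k+G]$; set $p_k:=n_{j(k)}-m_k\in\{0,1,\dots,G\}$. Passing to a subsequence I may take $p_k\equiv p$ constant, and $m_k\to\infty$ forces $n_{j(k)}\to\infty$. Now the coefficient of $S^{m_k}J_+$ at the fixed position $p$ equals $a_{m_k+p}=a_{n_{j(k)}}$, which tends to $1$ by hypothesis; on the other hand, pointwise convergence $S^{m_k}J_+\to J_0$ shows that this same coefficient tends to $a_p(J_0)$. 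Hence $a_p(J_0)=1$, and $n_0=p$ is the site we sought. The role of the bounded gaps is exactly to keep at least one ``nearly free'' site within a fixed bounded window of each shift origin, so that the relation $a_{n_j}\to 1$ is not lost in passing to the limit; without such control the favorable sites could escape to infinity relative to every $m_k$ and leave no trace in $J_0$.
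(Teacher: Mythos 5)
Your proof is correct and follows essentially the route the paper intends: it runs the same $\omega$-limit set argument used for the Schr\"odinger version (via \cite[Theorem 1.4]{Remac} and Theorem \ref{T1.1}), with the bounded-gap hypothesis supplying, in each limit operator, a site where the $a$-coefficient equals $1$. The paper leaves that last localization step implicit (``the same argument establishes\dots'', or alternatively deduces the theorem from the oracle-type statement), and you have filled it in correctly.
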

Here's another (related) immediate consequence of Theorem \ref{T1.1}:
\begin{Theorem}
\label{T1.2}
Suppose that $\Sigma_{ac}(J_+)\supset [B-2A,B+2A]$. Then
\begin{equation}
\label{1.1}
\liminf_{n\to\infty} a_n \ge A .
\end{equation}
\end{Theorem}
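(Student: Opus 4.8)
The plan is to run exactly the argument sketched above for the Schr\"odinger version of the DR Theorem, now with the general interval $I=[B-2A,B+2A]$ in place of $[-2,2]$, and to read off the bound \eqref{1.1} from the $\omega$ limit set. Write $\ell=\liminf_{n\to\infty}a_n$; the goal is $\ell\ge A$. Since $a,b$ are bounded, I would first choose a subsequence $n_j\to\infty$ along which $a_{n_j}\to\ell$, and then, by a diagonal argument, pass to a further subsequence along which the shifted operators $S^{n_j}J_+$ converge coefficientwise to a whole line Jacobi matrix $J_0$. By construction $J_0\in\omega(J_+)$, and with the usual normalization of $S$ the coefficient of $J_0$ at the origin is $a_0(J_0)=\lim_j a_{n_j}=\ell$.

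Next I would feed the spectral hypothesis into this limit. Since $\Sigma_{ac}(J_+)\supset I$, \cite[Theorem 1.4]{Remac} shows that every element of $\omega(J_+)$ is reflectionless on $\Sigma_{ac}(J_+)$, hence in particular on the subset $I$: reflectionlessness is the pointwise condition $\mathrm{Re}\,g_n(t)=0$ for almost every $t$, so it passes to any subset of positive measure, and $|I|=4A>0$. Thus $J_0\in\mathcal R(I)$, and Theorem \ref{T1.1} applies and gives $a_n(J_0)\ge A$ for all $n\in\Z$. Evaluating at $n=0$ yields $\ell=a_0(J_0)\ge A$, which is exactly \eqref{1.1}. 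Note that only the first (inequality) clause of Theorem \ref{T1.1} is needed here; the rigidity clause plays no role for this weaker conclusion.

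The routine ingredient is the compactness/diagonal extraction of $J_0$, and the only points requiring care are that the limit $J_0$ really is a two-sided (whole line) operator, so that Theorem \ref{T1.1} is applicable to it, and that reflectionlessness survives the passage to the limit. Both are exactly what the construction of $\omega(J_+)$ and \cite[Theorem 1.4]{Remac} supply, so I expect no genuinely new difficulty: the entire content of the statement already sits in Theorem \ref{T1.1}, and the remaining work is only to transport the pointwise lower bound on a single coefficient of an arbitrary $\omega$ limit back into a $\liminf$ statement for $J_+$ itself.
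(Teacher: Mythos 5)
Your proposal is correct and follows the paper's own argument essentially verbatim: pass to an $\omega$ limit point realizing the $\liminf$, invoke \cite[Theorem 1.4]{Remac} to place it in $\mathcal R(I)$, and apply the inequality clause of Theorem \ref{T1.1}. The paper merely compresses the compactness/diagonal extraction into the phrase ``there would be $\omega$ limit points whose coefficients violate this inequality,'' which you have simply spelled out.
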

\begin{proof}
By \cite[Theorem 1.4]{Remac}, every $J_0\in\omega(J_+)$ satisfies $J_0\in\mathcal R(I)$.
By Theorem \ref{T1.1}, this implies that $a_n(J_0)\ge A$ for all $n\in\Z$. If \eqref{1.1} didn't
hold, then there would be $\omega$ limit points whose coefficients violate this inequality.
\end{proof}
It was previously known that $\liminf (a_1\cdots a_n)^{1/n}\ge A$ in the situation of
Theorem \ref{T1.2}; see \cite[Theorem 5.7]{DeiftSim} for a closely related statement,
and \cite{Simpot} for the use of potential theoretic tools in this context.

Theorem \ref{T1.1} also generalizes certain uniqueness results for
ergodic operators from \cite{DeiftSim}, but we won't make this explicit.
Finally, by making use of basic continuity properties, we can formulate
an Oracle Theorem type version of Theorem \ref{T1.1}.
\begin{Theorem}
\label{T1.1oracle}
Fix $I$ as above and $R>|B|+2A$. Then, for every $\epsilon>0$ and $L\in\N$, there exists $\delta>0$
such that if $J\in\mathcal R(I)$, $\|J\|\le R$,
and $a_{n_0}<A+\delta$ for some $n_0\in\Z$, then
\[
A\le a_n < A+\epsilon, \quad |b_n-B|<\epsilon \quad\quad (n_0-L \le n \le n_0+L) .
\]
\end{Theorem}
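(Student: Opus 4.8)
The plan is to argue by compactness and contradiction, upgrading the rigidity statement of Theorem \ref{T1.1} to the stated quantitative form. First I would observe that the lower bound $A \le a_n$ is not really part of what needs proving here: it holds for every $J \in \mathcal R(I)$ by the first assertion of Theorem \ref{T1.1}, independently of the hypothesis $a_{n_0} < A + \delta$. So the genuine content is the two upper bounds $a_n < A + \epsilon$ and $|b_n - B| < \epsilon$ on the window $n_0 - L \le n \le n_0 + L$.

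Suppose these fail for some $\epsilon > 0$ and $L \in \N$. Then for each $k \in \N$, taking $\delta = 1/k$, there is a matrix $J^{(k)} \in \mathcal R(I)$ with $\|J^{(k)}\| \le R$ and a site $n_0^{(k)}$ such that $a_{n_0^{(k)}}(J^{(k)}) < A + 1/k$, yet at some site in $[n_0^{(k)} - L, n_0^{(k)} + L]$ one of the two upper bounds is violated. Since both $\mathcal R(I)$ and the norm are invariant under the shift $S$, I may replace $J^{(k)}$ by $S^{n_0^{(k)}} J^{(k)}$ and assume $n_0^{(k)} = 0$ throughout. The norm bound forces $a_n(J^{(k)}), |b_n(J^{(k)})| \le R$ for all $n$, so the coefficients lie in a fixed compact product space; passing to a subsequence, I obtain coefficientwise convergence $J^{(k)} \to J^{(\infty)}$. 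The limit again satisfies $\|J^{(\infty)}\| \le R$ (test against finitely supported vectors), and, crucially, $J^{(\infty)} \in \mathcal R(I)$, because the reflectionless property is preserved under such limits; this is part of the structure theory developed in \cite{Remac}, and it is the one genuinely nontrivial input to the argument.

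With the limit object in hand the rigidity of Theorem \ref{T1.1} does the rest. Coefficientwise convergence gives $a_0(J^{(\infty)}) = \lim_k a_0(J^{(k)}) \le A$, while $a_0(J^{(\infty)}) \ge A$ by the lower bound of Theorem \ref{T1.1} applied to $J^{(\infty)} \in \mathcal R(I)$; hence $a_0(J^{(\infty)}) = A$ exactly. The second, rigid part of Theorem \ref{T1.1} then forces $a_n(J^{(\infty)}) = A$ and $b_n(J^{(\infty)}) = B$ for all $n$. On the other hand, for each $k$ some site $m_k \in \{-L, \dots, L\}$ witnesses the failure of an upper bound; since this is a finite set, I may pass to a further subsequence along which $m_k \equiv m$ is constant, and then in the limit either $a_m(J^{(\infty)}) \ge A + \epsilon$ or $|b_m(J^{(\infty)}) - B| \ge \epsilon$, contradicting $a_m(J^{(\infty)}) = A$, $b_m(J^{(\infty)}) = B$.

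The only step that is more than bookkeeping is the closedness of $\mathcal R(I)$ under coefficientwise limits, i.e.\ that a limit of reflectionless matrices is again reflectionless on the same set; everything else (translation invariance, the compactness of the coefficient space under the norm bound, and the finite-window pigeonhole) is routine. The role of the hypothesis $R > |B| + 2A$ is simply to guarantee that the extremal constant matrix $a_n \equiv A$, $b_n \equiv B$, whose norm equals $|B| + 2A$, is itself an admissible limit, so that no spurious obstruction arises at the boundary of the norm ball.
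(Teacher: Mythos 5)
Your argument is correct, but it is not the route the paper takes. You run a soft compactness-and-contradiction argument entirely on the coefficient side: shift the putative counterexamples so that $n_0=0$, extract a coefficientwise limit $J^{(\infty)}$, invoke the fact that $\{J\in\mathcal R(I):\|J\|\le R\}$ is closed under such limits (this is indeed in \cite{Remac}, and you are right to flag it as the one nontrivial input), and then apply the rigidity half of Theorem \ref{T1.1} to $J^{(\infty)}$. The paper instead stays on the spectral side and is quantitative: from \eqref{2.2} it extracts the identity $\|\nu_+-\nu_+^{(0)}\|=\nu_+(\R)-\nu_+^{(0)}(\R)=a_0^2-1$ (in the normalized case $A=1$, $B=0$), so that $a_{n_0}$ being close to $A$ forces the half-line spectral measure to be close in total variation --- hence weak-$*$ --- to that of the constant-coefficient matrix, and then cites the uniform continuity of the inverse spectral map $\nu_+\mapsto J$ on $\{J\in\mathcal R(I):\|J\|\le R\}$ with the metric \eqref{defd} (\cite[Proposition 2.3]{PR2}). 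The trade-off: the paper's proof reuses the machinery already built for Theorem \ref{T1.1} and in principle tracks an explicit modulus for $\delta(\epsilon,L,R,A)$ through the continuity statement, whereas yours is purely qualitative but needs only Theorem \ref{T1.1} as a black box together with the compactness of the norm-bounded reflectionless class. Two small points worth tightening in your write-up: to know that $J^{(\infty)}$ is a genuine Jacobi matrix (all $a_n>0$) before you may call it reflectionless, use the uniform lower bound $a_n(J^{(k)})\ge A>0$ from Theorem \ref{T1.1} applied to the approximants, not to the limit; and when you pass to the subsequence along which the violating site $m_k\equiv m$ is constant, you should also fix which of the two inequalities is violated, so that the closed inequality survives the limit --- both are one-line fixes.
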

Notice that this is uniform in the sense that
$\delta$ depends on $\epsilon,L,R,A$, but not on $J$ (or $n_0$, $B$).
The same statement is obtained for all sufficiently large $n_0$ if instead of
$J\in\mathcal R(I)$, we just assume that $\Sigma_{ac}(J_+)\supset I$; this variant
immediately follows from \cite[Theorem 1.4]{Remac} again, which shows that
$\omega(J_+)\subset\mathcal R(I)$. Here's still another way of saying this: If $\Sigma_{ac}(J_+)
\supset I$ and $a_{n_j}\to A$ along some subsequence $n_j\to\infty$, then there are $L_j\to\infty$
so that
\[
\lim_{j\to\infty} \sup \left\{ \left| a_k - A \right|+\left| b_k-B\right| : |k-n_j|\le L_j \right\} = 0
\]
(if $a_{n_0}$ is close to $A$, then $(a_n,b_n)$ are close to $(A,B)$ on a long
interval centered at $n_0$).
Clearly, Theorem \ref{TDR} is an immediate consequence of this in turn;
all these results are closely related.

I'll present the proofs of this and Theorem \ref{T1.1} in the following section.
In the final section, I'll discuss a generalization of Theorem \ref{T1.1},
where we replace the interval $I$ by a general compact set.
This yields a general version of Theorem \ref{T1.2},
and this may be interpreted as a quantitative version
of the Theorem of Dombrowski and Simon-Spencer \cite{Dom,SimSp} on the absence of
absolutely continuous spectrum. We'll discuss all this in Section 3.
\section{Proof of Theorems \ref{T1.1}, \ref{T1.1oracle}}
We will make use of the inverse spectral theory for reflectionless
operators in the form presented in \cite{PR2,Remac} (but see also
\cite{Craig,Kot,Teschl}, among others, where such tools were used much earlier).
I will review some aspects of this theory, but please also consult
\cite{PR2,Remac,Teschl} for further details.

\begin{proof}[Proof of Theorem \ref{T1.1}]
By rescaling and shifting, it of course suffices to discuss
the case $A=1$, $B=0$. So let $J\in\mathcal R([-2,2])$.
Consider its $H$ function $H(z)=-1/g_0(z)$ (compare \cite{PR2,Remac}) and the associated
Krein function
\[
\xi(t) = \frac{1}{\pi}\lim_{y\to 0+} \textrm{Im }\ln H(t+iy) .
\]
Since $J$ is reflectionless on $(-2,2)$, we have that $\xi=1/2$ on this set;
see, for example, \cite[Proposition 2.1]{PR2}. Fix $R>0$ so large that $\sigma(J)\subset [-R,R]$.
Then the exponential Herglotz representation of $H$ reads
\begin{equation}
\label{2.3}
H(z) = (z+R)\exp\left( \int_{-R}^R \frac{\xi(t)\, dt}{t-z} \right) .
\end{equation}
The $H$ function $H_0(z)=\sqrt{z^2-4}$ of the free Jacobi matrix ($a_n\equiv 1$, $b_n\equiv 0$)
has a similar representation; here we can take $R=2$, and,
of course, $\xi_0(t)=\xi(t)=1/2$ on $t\in (-2,2)$. Thus
\[
H(z) = H_0(z) h(z) ,
\]
where
\begin{equation}
\label{2.1}
h(z) = \exp \left( \int_{-R}^{-2} \frac{\xi(t)-1}{t-z}\, dt + \int_2^R \frac{\xi(t)\, dt}{t-z}\right) .
\end{equation}
As a Herglotz function, $H$ has a unique associated (finite, compactly supported) measure $\rho$;
for example, we can obtain $\rho$ as the weak-$*$ limit
\begin{equation}
\label{2.4}
d\rho(t) = \frac{1}{\pi}\, \lim_{y\to 0+} \textrm{\rm Im }H(t+iy)\, dt .
\end{equation}
Since $J\in\mathcal R([-2,2])$, we have that $\textrm{Im }H(t)=|H(t)|$ almost
everywhere on $(-2,2)$, and this function is the density of $\pi\rho_{ac}$ on this set.
The Jacobi matrices from $\mathcal R([-2,2])$ are in one-to-one correspondence
with the half line spectral measures $\nu_+$ of the following type:
\[
d\nu_+(t) = \frac{1}{2}\chi_{(-2,2)}(t)\, d\rho_{ac}(t) + f(t)\, d\rho(t) ,
\]
where $0\le f\le 1$ and $f=0$ on $(-2,2)$. See \cite[Section 2]{PR2} and/or \cite[Section 5]{Remac}.
Rewrite this as
\[
d\nu_+(t) = \frac{1}{2\pi}\chi_{(-2,2)}(t) |H(t)|\, dt + d\mu(t) .
\]
If $J=J_0$ is the free Jacobi matrix, then the corresponding measure is given by
\[
d\nu_+^{(0)}(t) = \frac{1}{2\pi}\chi_{(-2,2)}(t) |H_0(t)|\, dt ,
\]
so we can also say that
\begin{equation}
\label{2.2}
d\nu_+(t) = |h(t)|\, d\nu_+^{(0)}(t) + d\mu(t) .
\end{equation}
Now by direct inspection of \eqref{2.1}, we easily confirm that $|h(x)|\ge 1$ for $-2<x<2$;
indeed, both integrals from the exponent are non-negative for these $z=x$. Thus
\[
\nu_+(\R) \ge \nu_+^{(0)}(\R) = 1 .
\]
We have the general formula $\nu_+(\R)=a_0^2$; compare the discussion of formula (2.3)
from \cite{PR2} or see Sections 2.1, 2.7 of \cite{Teschl}. This gives the inequality
stated in Theorem \ref{T1.1} (for $n=0$, but of course we can consider shifts of $J$
to obtain the statement for
general $n\in\Z$). Moreover, if $a_0=1$, then $|h(x)|=1$ almost everywhere on
$x\in(-2,2)$, but then another look at \eqref{2.1} reveals that
$\xi=1$ on $(-R,-2)$, $\xi=0$ on $(2,R)$.
So $h\equiv 1$ and thus $H=H_0$, $\nu_+=\nu_+^{(0)}$, and since $\nu_+$ determines $J$, as mentioned
earlier, we conclude that $J$ is the free Jacobi matrix $J_0$.
\end{proof}

\begin{proof}[Proof of Theorem \ref{T1.1oracle}]
To obtain Theorem \ref{T1.1oracle} from this argument, notice that
\[
\left\| \nu_+ -\nu_+^{(0)}\right\| = \nu_+(\R)-\nu_+^{(0)}(\R) = a_0^2-1 .
\]
This follows because \eqref{2.2}
shows that $d\nu_+^{(0)}=g\, d\nu_+$ with $g\le 1$.
The map $\nu_+\mapsto J$, restricted to those $\nu_+$ for which the corresponding $J\in\mathcal R(I)$
satisfies $\|J\|\le R$, is uniformly continuous if we use a metric that induces the weak-$*$ topology on
the $\nu_+$ and pointwise convergence of the coefficients for the Jacobi matrices $J$;
explicitly, we may use the metric
\begin{equation}
\label{defd}
d(J,J') = \sum_{n\in\Z} 2^{-|n|}\left( |a_n-a'_n|+|b_n-b'_n| \right)
\end{equation}
on the image.
See \cite[Proposition 2.3]{PR2} for these statements. Theorem \ref{T1.1oracle} follows.
\end{proof}
\section{Reflectionless operators on general sets}
I now present a generalization of Theorem \ref{T1.1}; instead of intervals
$I$, we now consider arbitrary compact sets $K$. We will of course insist that $K$ has
positive Lebesgue measure; otherwise, the condition of being reflectionless
on $K$ becomes vacuous.
We must first decide on a proper replacement for the Jacobi matrix
with constant coefficients $a_n=A$, $b_n=B$.
It is natural to proceed as follows. For a compact set $K\subset\R$, define
\[
\mathcal R_0(K)=\{ J\in\mathcal R(K): \sigma(J)\subset K \} .
\]
This is a compact set itself if we again use the metric $d$ from \eqref{defd}.
If $K=I=[B-2A,B+2A]$, then $\mathcal R_0(K)$ consists of a single operator, the Jacobi matrix
with constant coefficients.

Please see also \cite{PR1} and especially
\cite{PR2} for a rather extensive discussion of various aspects of these spaces.
\begin{Theorem}
\label{T3.1}
Fix a compact set $K\subset\R$ of positive Lebesgue measure. Then there exists a constant
$A=A(K)>0$ such that the following holds:
If $J\in\mathcal R(K)$, then $a_n\ge A$ for all $n\in\Z$. Moreover, if $a_{n_0}=A$ for
a single $n_0\in\Z$, then $J\in\mathcal R_0(K)$ (and there are such
minimizing operators $J_0\in\mathcal R_0(K)$ for which $a_{n_0}(J_0)=A$).
\end{Theorem}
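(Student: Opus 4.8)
The plan is to run the argument of Theorem \ref{T1.1} in reverse, as a variational problem. First I would record, exactly as in the interval case but now invoking the general inverse spectral theory of \cite{PR2}, the identity
\[
a_0(J)^2 = \nu_+(\R) = \frac{1}{2\pi}\int_K |H(t)|\,dt + \mu(\R),
\]
valid for every $J\in\mathcal R(K)$, where $\mu\ge 0$ is the part of $\nu_+$ living off $K$. The Krein function $\xi$ parametrizes these operators: it satisfies $\xi=1/2$ on $K$ and $0\le\xi\le 1$ off $K$, and $J\in\mathcal R_0(K)$ precisely when $\xi\in\{0,1\}$ off $K$ (no spectrum outside $K$). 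Since $\mu\ge 0$, it suffices to minimize $F(\xi):=\int_K|H(t)|\,dt$ over all admissible $\xi$ and to show that the minimizer is of $\{0,1\}$ type off $K$, i.e.\ comes from $\mathcal R_0(K)$.

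The heart of the matter is that $F$ is a \emph{convex} functional. Indeed, from the exponential Herglotz representation, $\log|H(t)|$ is an affine function of $\xi$ for $t\in K$, so $F(\xi)=\int_K\exp(\log|H(t)|)\,dt$ is convex on the convex admissible set and has a unique minimizer $\xi_0$. To locate it I would compute the gradient: for $s$ off $K$,
\[
\frac{\partial F}{\partial\xi(s)} = \int_K \frac{|H(t)|}{s-t}\,dt =: G(s).
\]
Because $|H|>0$ a.e.\ on $K$, the function $G$ is strictly decreasing on each gap of $K$, running from $+\infty$ to $-\infty$, so it changes sign exactly once there; on the two unbounded components of the complement it has constant sign. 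The variational inequality then forces $\xi_0$ to be \emph{bang--bang}: $\xi_0=1$ below $K$, $\xi_0=0$ above $K$, and in each bounded gap $\xi_0$ jumps once from $0$ to $1$ at the zero of $G$. Thus $\xi_0$ is of $\mathcal R_0(K)$ type, and setting $A:=a_0(J_0)$ for the corresponding $J_0\in\mathcal R_0(K)$ gives $A=\min_{\mathcal R_0(K)}a_0>0$.

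With this in hand the theorem follows quickly. For arbitrary $J\in\mathcal R(K)$ the displayed identity and $\mu\ge0$ give $a_0(J)^2\ge\frac{1}{2\pi}F(\xi_J)\ge\frac{1}{2\pi}F(\xi_0)=A^2$, and applying this to shifts of $J$ yields $a_n\ge A$ for all $n$. If $a_{n_0}=A$ for some $n_0$, then running the estimate at site $n_0$ forces equality throughout: $\mu=0$ and, by uniqueness of the convex minimizer, $\xi=\xi_0$; hence $\sigma(J)\subset K$ and $J\in\mathcal R_0(K)$. A minimizing operator with $a_{n_0}(J_0)=A$ is obtained by shifting the $J_0$ constructed above. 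The one genuinely new difficulty, and the step I expect to be the main obstacle, is the gap analysis in the second paragraph: when $K$ has bounded gaps the naive pointwise bound $|h|\ge1$ on $K$ used for intervals breaks down, because the Cauchy kernel $1/(s-t)$ changes sign as $t$ runs across $K$. The convexity of $F$ together with the strict monotonicity of $G$ on each gap is precisely what replaces that bound and pins the minimizer inside $\mathcal R_0(K)$.
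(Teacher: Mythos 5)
Your proposal reaches the right conclusion by a genuinely different route from the paper. The paper does not set up a global variational problem at all: it takes the given Krein function $\xi$ and explicitly constructs a competitor $\xi_0\in X(K)$ by a gap-by-gap rearrangement (on each bounded gap $(c,d)$, replace $\xi$ by $\chi_{(d-g,d)}$ with $g=\int_c^d\xi$, and set $\xi=1$ below $K$, $\xi=0$ above $K$), using the elementary monotonicity of $t\mapsto 1/(t-x)$ to show that each such step decreases $|H|$ pointwise a.e.\ on $K$; the only analytic work is then an $L^2$-limit of Hilbert transforms to handle infinitely many gaps. Your approach instead minimizes the convex functional $F(\xi)=\int_K|H_\xi|\,dt$ over all admissible $\xi$ and reads off the bang--bang structure from the first-order conditions; the key observation that $G(s)=\int_K|H(t)|(s-t)^{-1}\,dt$ is strictly decreasing on each component of $K^c$, hence changes sign at most once, correctly forces the minimizer into $X(K)$, and your treatment of the equality case ($\mu=0$ plus minimality of $\xi$) matches the paper's. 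What your route buys is a cleaner conceptual picture (the extremal Krein function is the unique solution of a convex program); what it costs is a layer of functional-analytic justification that the paper's hands-on construction avoids entirely.

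Those justifications are where your write-up has real gaps. First, existence of a minimizer is not free: $F$ must be shown lower semicontinuous in a topology in which the admissible set is compact (this works, e.g., via weak $L^2$-compactness plus convexity and strong lower semicontinuity through $L^2$-continuity of the Hilbert transform, but it needs saying). Second, convexity alone does not give a \emph{unique} minimizer; you need strict convexity, which amounts to the fact that $T(\xi_1-\xi_2)$ cannot vanish a.e.\ on a positive-measure subset of $K$ unless $\xi_1=\xi_2$ -- a uniqueness theorem for Cauchy transforms, not a formality. (You can sidestep uniqueness entirely by noting that \emph{any} minimizer satisfies the first-order conditions and hence lies in $X(K)$, which is all the equality case needs.) Third, the claim that $G$ runs from $+\infty$ to $-\infty$ across each gap can fail when $|H|$ vanishes at the gap edges (as it does for band edges); only strict monotonicity, hence ``at most one sign change,'' is available, but fortunately that suffices since $\xi_0$ constant on a gap is still of $X(K)$ type. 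Finally, the interchange of integrals in computing $G$ and the differentiation under the integral sign require care because $\int_{K^c}|s-t|^{-1}\,ds$ need not be finite for $t\in K$. None of these is fatal, but each is a genuine step; the paper's rearrangement argument is engineered precisely so that none of them arises.
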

As we will see, this is easy to prove with the same methods. Note, however, that
the minimizing $J_0$'s can easily form a small subset of
$\mathcal R_0(K)$. There are simple examples (periodic operators)
where most $J\in\mathcal R_0(K)$ satisfy the strict inequalities $a_n(J)>A$.

It would be interesting to investigate if one could also treat all operators from
$\mathcal R_0(K)$ on an equal footing,
at least in certain well-studied cases (finite gap sets $K$). One could try to
find a function $F(a,b)$ of the coefficients that characterizes $\mathcal R_0(K)$ among all
operators from $\mathcal R(K)$ in the sense that
$F(S^na,S^nb)\ge F_0$ for all $J\in\mathcal R(K)$ and all $n\in\Z$ ($S$ denotes the shift), and
we have equality for a single $n_0\in\Z$ precisely if $J\in\mathcal R_0(K)$. This can reasonably be
expected to hold only if $K$ is an essentially closed set (consider solitons!).

Theorem \ref{T3.1} does include Theorem \ref{T1.1} as a special case, because, as discussed,
if $K=[B-2A,B+2A]$, then only the Jacobi matrix with constant coefficients $a_n=A$, $b_n=B$
lies in $\mathcal R_0(K)$.
\begin{proof}
We follow the strategy from Section 2, suitably adjusted. We can simply define
\[
A = \inf_{J\in\mathcal R_0(K)} a_0(J) .
\]
Since $\mathcal R_0(K)$ is a compact metric space and $J\mapsto a_0(J)$ is a continuous map,
$A$ is actually a minimum. In particular, $A>0$ (one can of course consider Jacobi matrices with
zero $a$'s, but it's easy to see that such an operator
cannot be reflectionless on a positive measure set).

We must now show that if $J\in\mathcal R(K)$, then
$a_0(J)\ge A$, and we will have strict inequality here unless $J\in\mathcal R_0(K)$.
Since both $\mathcal R(K)$ and $\mathcal R_0(K)$ are shift invariant, this will yield the Theorem.

The $\xi$ function of a $J\in\mathcal R(K)$ satisfies
$\xi=1/2$ on $K$, and of course we have that $0\le\xi\le 1$. Fix a bounded component $I=(c,d)$ of the
open set $K^c$. We now claim that if we replace $\xi$ with
\[
\xi_1(t) = \begin{cases} \chi_{(d-g,d)}(t) & t\in I\\ \xi(t) & t\notin I \end{cases} ,
\]
where $g=\int_c^d\xi\, dt$, then $|H_1(x)|\le |H(x)|$ for almost every $x\notin (c,d)$.
More precisely, recall that we define $H(x)=\lim_{y\to 0+}H(x+iy)$ for $x\in\R$, and this
limit exists almost everywhere; we obtain the inequality for all $x\notin [c,d]$ for
which $H(x)$, $H_1(x)$ can be defined in this way.
The proof of this claim is easy; use the representation \eqref{2.3} to compare $H$ and
$H_1$. The statement is
also formulated as \cite[Lemma 2.5]{PR1}, and a detailed formal proof can be found there.

We now want to modify $\xi$ in this way on all bounded components of $K^c$, and we
also want to put $\xi=1$ to the left of $K$ and $\xi=0$ to the right of $K$. The plan is
to pass from $\xi$ to a new Krein function $\xi_0$ through a series of intermediate
functions $\xi_n$, and we expect that $|H_0(x)|\le |H(x)|$ for almost every $x\in K$,
because $|H_n(x)|$ goes down at each individual step.

To make this rigorous, it is best to work with the Hilbert transform
\[
(T\xi)(x) = \lim_{y\to 0+} \int_{|t-x|>y; |t|<R} \frac{\xi(t)\, dt}{t-x} ;
\]
here, $R>0$ is again chosen so large that $\sigma(J)\subset [-R,R]$.
We use the following basic facts; please consult \cite{CMT} for background information.
The limit defining the Hilbert transform
exists for almost every $x\in\R$, and
\[
|H(x)|=(x+R) e^{(T\xi)(x)}
\]
almost everywhere. Moreover, if $\xi_n\to\xi$ in $L^2(-R,R)$, then also $T\xi_n\to T\xi$ in $L^2(\R)$.

Now put $\widetilde{\xi}=1$ to the left of $K$, $\widetilde{\xi}=0$
to the right of $K$, and $\widetilde{\xi}=\xi$ otherwise.
It is obvious that $T\widetilde{\xi}\le T\xi$ (almost everywhere) on $K$. Let $\{ I_n \}_{n\ge 1}$
be a complete list of the bounded components of $K^c$; if there are only finitely many components,
then no limiting process is necessary and this part of the discussion becomes much easier.
Modify $\widetilde{\xi}$ on $I_1$ in the
way described above to obtain $\xi_1$. As we saw earlier, we then have that
$T\xi_1\le T\widetilde{\xi}$ almost everywhere on $K$. Continue in this way; construct $\xi_{n+1}$ from
$\xi_n$ by modifying this function on $I_{n+1}$. Clearly, $\xi_n$ converges in $L^1(-R,R)$
(and thus also in $L^2$, since $0\le\xi_n\le 1$) to a limiting function $\xi_0$.
The Hilbert transforms converge in $L^2$ and also
pointwise almost everywhere on $(-R,R)$: $(T\xi_n)(x)\to
(T\xi_0)(x)$ (it's not necessary to pass to a subsequence because this sequence
is decreasing on $K$ and eventually constant on $K^c$).
Hence $(T\xi_0)(x)\le (T\xi)(x)$ for almost every $x\in K$.
The Krein function $\xi_0$ is a step function on each gap $I_n$, and it jumps
from $0$ to $1$ (or not at all). Moreover, $\xi_0=1/2$ on $K$, and $\xi_0=1$
to the left of $K$ and $\xi_0=0$ to the right of $K$. In \cite{PR2},
we introduced the notation $X(K)$ for
the set of all Krein functions of this type, so we can summarize by writing
$\xi_0\in X(K)$. The significance of $X(K)$ comes from the fact that
$X(K)$ is exactly the collection of Krein functions
of operators from $\mathcal R_0(K)$; see \cite[Section 2]{PR2}
for this statement, especially the discussion of equation (2.6) of \cite{PR2}.

Recall that for any $J\in\mathcal R(K)$, we have that
\[
\pi\rho_{ac}(K) = \int_K |H(x)|\, dx ,
\]
where $\rho$ refers to the measure associated with the $H$ function of $J$, as in \eqref{2.4}.
Thus, in terms of $\rho$, what we have just shown says that if $J\in\mathcal R(K)$, then
there exists $\xi_0\in X(K)$ so that
\begin{equation}
\label{3.3}
\rho_{ac}(K)\ge \rho_{ac}^{(0)}(K) ,
\end{equation}
where $\rho^{(0)}$ is the measure associated with (the $H$ function of) $\xi_0$.
Now recall how the half line spectral measures $\nu_+$ were obtained from $\rho$: We have that
$\nu_+ = f\rho$, where $f=1/2$ Lebesgue almost everywhere on $K$ and $0\le f\le 1$, and
every such $f$ is admissible. Thus $\nu_+(\R)\ge (1/2)\rho_{ac}(K)$.
Moreover, if $\xi_0\in X(K)$ and we take $f=0$ off the support of $\rho_{ac}^{(0)}$,
then the corresponding
$\nu_+^{(0)}=(1/2)\chi_K\rho_{ac}^{(0)}$ belongs to a Jacobi matrix $J_0\in\mathcal R_0(K)$.
See again \cite[Section 2]{PR2} for this description of the spectral data of $\mathcal R_0(K)$.

The upshot of all this is that \eqref{3.3} implies that for arbitrary $J\in\mathcal R(K)$,
there exists $J_0\in\mathcal R_0(K)$ so that
\[
\nu_+(\R) \ge \nu_+^{(0)}(\R) .
\]
In other words, $a_0(J)\ge a_0(J_0)\ge A$, as we wanted to show.

It remains to discuss the case of equality. Clearly, this can only happen if $T\xi_0=T\xi$
almost everywhere on $K$ in the construction above, but at each individual step, when going from
$\xi_n$ to $\xi_{n+1}$, the Hilbert transform will decrease stricly almost everywhere on $K$ unless
$\xi_n=\xi_{n+1}$. Therefore, $T\xi_0=T\xi$ on $K$ implies that $\xi=\xi_0$,
and thus $\xi\in X(K)$. Moreover,
even for fixed $\rho$, the half line spectral measure
$\nu_+(\R)=a_0^2$ is of course minimized only by $\nu_+=(1/2)\chi_K\rho_{ac}$. As just explained,
this implies that $J\in \mathcal R_0(K)$.
\end{proof}
Finally, here's the promised quantitative
version of the result of Dombrowski and Simon-Spencer \cite{Dom,SimSp}.
These authors show by a decoupling argument that $\liminf a_n>0$ if $\sigma_{ac}(J_+)\not=\emptyset$.
\begin{Theorem}
Let $M\subset\R$ be a bounded Borel set of positive Lebesgue measure. Then there exists a constant
$A=A(M)>0$ such that
\begin{equation}
\label{3.4}
\liminf_{n\to\infty} a_n \ge A
\end{equation}
for every half line Jacobi matrix $J_+$ with $\Sigma_{ac}(J_+)\supset M$.
\end{Theorem}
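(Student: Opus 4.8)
The plan is to reduce this to Theorem \ref{T3.1} by approximating $M$ from inside by a compact set and then running the $\omega$-limit argument exactly as in the proof of Theorem \ref{T1.2}. Since $|M|>0$, inner regularity of Lebesgue measure yields a compact set $K\subset M$ with $|K|>0$. I would simply define $A=A(M):=A(K)$, the constant provided by Theorem \ref{T3.1} applied to this $K$; it is positive precisely because $|K|>0$.

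The key elementary observation is that reflectionlessness passes to subsets: if an operator is reflectionless on a set $S$, then, since $\mathrm{Re}\, g_n=0$ almost everywhere on $S$, the same holds almost everywhere on any measurable $K\subset S$. Now suppose $\Sigma_{ac}(J_+)\supset M\supset K$. By \cite[Theorem 1.4]{Remac}, every $J_0\in\omega(J_+)$ is reflectionless on $\Sigma_{ac}(J_+)$, hence on $K$; that is, $J_0\in\mathcal R(K)$. Theorem \ref{T3.1} then gives $a_n(J_0)\ge A$ for all $n\in\Z$ and all $J_0\in\omega(J_+)$.

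The conclusion then follows by a standard compactness argument. If \eqref{3.4} failed, there would be a sequence $n_j\to\infty$ with $a_{n_j}\to L<A$. Because $a,b$ are bounded, the shifted operators $S^{n_j}J_+$ have a subsequence converging, in the metric $d$ of \eqref{defd}, to some right limit $J_0\in\omega(J_+)$, and continuity of the map $J\mapsto a_0(J)$ forces $a_0(J_0)=L<A$, contradicting the previous paragraph. Hence $\liminf_{n\to\infty} a_n\ge A$.

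As for difficulty: given the cited results, the argument is essentially routine, and the only genuine obstacle is organizational, namely the passage from the Borel set $M$ to a compact $K$ together with the correct invocation of \cite[Theorem 1.4]{Remac}, whose conclusion is reflectionlessness on $\Sigma_{ac}(J_+)$ rather than on $M$ itself. The boundedness of the coefficient sequences makes the extraction of the right limit $J_0$ automatic, so no additional compactness input is required.
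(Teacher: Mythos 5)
Your proposal is correct and follows essentially the same route as the paper: pick a compact $K\subset M$ of positive measure, take $A=A(K)$ from Theorem \ref{T3.1}, note $\omega(J_+)\subset\mathcal R(\Sigma_{ac}(J_+))\subset\mathcal R(K)$ via \cite[Theorem 1.4]{Remac}, and conclude by the standard right-limit compactness argument. The paper's version is just more terse, leaving the inner-regularity step and the extraction of the offending $\omega$-limit point implicit.
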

\begin{proof}
Fix a compact subset $K\subset M$ of positive Lebesgue measure, and let $A=A(K)>0$ be the constant
from Theorem \ref{T3.1}.
Then $a_n(J)\ge A$ for all (whole line operators) $J\in\mathcal R(M)\subset\mathcal R(K)$.
Since $\omega(J_+)\subset\mathcal R(M)$ by \cite[Theorem 1.4]{Remac}, we obtain \eqref{3.4} from this.
\end{proof}

\end{document}